\newtheorem{theorem}{Теорема}
\newtheorem{definition}{Определение}
\begin{document}
\renewcommand{\refname}{References}

\thispagestyle{empty}

\title[On an equality for the iterated  spherical mean ]{On an equality for the iterated weighted spherical mean and its applications}
\author{{Shishkina E.L., Sitnik S.M.}}%

\address{Elina Leonidovna Shishkina  % Контактные данные всех авторов и место работы указываются только на английском языке
\newline\hphantom{iii} Voronezh State University,
\newline\hphantom{iii} Universitetskaya pl. 1, Voronezh, 394000, Russia.}
\email{ilina\_dico@mail.ru}%

\address{Sergei Mihailovich Sitnik  % Контактные данные всех авторов и место работы указываются только на английском языке
\newline\hphantom{iii} Voronezh Institute of the Ministry of Internal Affairs,
\newline\hphantom{iii} pr. Patriotov, 53,  Voronezh, 394065, Russia;
\newline\hphantom{iii} \textit{and}
\newline\hphantom{iii} Peoples' Friendship University of Russia,
\newline\hphantom{iii} M. - Maklaya str., 6, Moscow, 117198, Russia.}%
\email{mathsms@yandex.ru}%

\thanks{\sc Shishkina, E.L., Sitnik, S.M.
On an identity for the iterated weighted spherical mean and its applications.}

\vspace{1cm}
\maketitle {\small
\begin{quote}
\noindent{\sc Abstract. } Spherical means are well--known useful tool in the theory of partial differential equations with applications to solving hyperbolic and ultrahyperbolic equations and problems of integral geometry, tomography and Radon transforms. We generalize iterated spherical means to weighted ones based on generalized translation operators and consider applications to $B$--hyperbolic  equations and transmission tomography problems.

\medskip

\noindent{\bf Keywords:} spherical means, weighted means, {\rm $\grave{A}$}sgeirsson theorem, iterated means, $B$--hyperbolic equation, integral geometry, tomography.
 \end{quote}
}

%\begin{theorem}
%Формулировка теоремы.
%\end{theorem}

%\begin{proof}
%Текст доказательства.
%\end{proof}

\section{Введение}

Сферические средние имеют многочисленные применения в теоретической математике и её приложениях. В классических трудах \cite{1}--\cite{3} приводятся различные применения сферических средних к теории уравнений в частных производных, в том числе эллиптического, гиперболического и ультрагиперболического типов. Кроме того, сферические средние являются объектом изучения интегральной геометрии с приложением результатов исследований к томографии \cite{Nat}, например фотоакустической \cite{Gorner}. Известна также тесная связь сферических средних с преобразованием Фурье и потенциалом М.~Рисса (см. \cite{Helgason2}). Представление решений различных дифференциальных уравнений с частными производными с помощью сферических средних также связано с теорией операторов преобразования \cite{Sita1}--\cite{Sita2}. При этом существует известный подход к теории преобразования Радона, основанный на разложении функций в ряды по сферическим гармоникам \cite{Dea}, восходящий к основополагающей работе Д.~Людвига (см. \cite{Nat}, \cite{Helgason2}). При этом подходе фундаментальную роль играют операторы преобразования Бушмана--Эрдейи (или Гегенбауэра--Чебышёва, как принято называть их в работах по томографии), см.
\cite{Sita3}--\cite{Rub2}.

В этой работе мы будем рассматривать \emph{весовое сферическое среднее}, в котором по каждой переменной подынтегральной функции действует обобщённый сдвиг, определенный  в \cite{Delsartes} (см. также \cite{levitan}). Такое среднее изучалось в работах \cite{LPSh1}, \cite{LPSh2}. В \cite{LPSh1} были получены свойства весового сферического среднего и была доказана теорема о среднем значении типа теоремы Асгейрссона (см. \cite{Asgeirsson}).  Весовое сферическое среднее в \cite{LPSh2}  было использовано для построения решения  задачи Коши для гиперболического уравнения, содержащего сингулярные дифференциальные операторы Бесселя по каждой из переменных (см. также \cite{Fox}, где решение этой задачи построено другим методом).

В настоящей работе доказаны новые тождества для итерированных весовых сферических средних, которые необходимы для получения явных формул при восстановлении функции по её весовым сферическим средним. Кроме того, с использованием весовых сферических  средних в работе  получена формула для представления функции через операторы обобщённого сдвига и преобразование Фурье--Бесселя или Ханкеля.
 Такие формулы используются в различных прикладных задачах томографии и интегральной геометрии.

\section{Основные определения}

Будем рассматривать функции $f(x)$, определённые в области
$$
\mathbb{R}^+_n{=}\{x{=}(x_1,\ldots,x_n)\in\mathbb{R}_n,\,\,\, x_1{>}0,\ldots, x_n{>}0\}.
$$

\begin{definition}
Следуя \cite{singul} (см. стр. 21),  функцию $f\in C^\infty(\mathbb{R}^+_n)$ будем называть \emph{чётной} по $x_i$, если
$\frac{d^{2k+1}f}{d x^{2k+1}_i}\biggr|_{x_i=0}=0$ при всех $k=0,1,2...$. Если $f(x)$ при $x\in\mathbb{R}^+_n$ не является бесконечно дифференцируемой, то будем называть эту функцию \emph{четной} по $x_i$, если её можно продолжить чётным образом на значения $x_i\leq 0$ с сохранением её свойств.
\end{definition}

Пространство $S_{ev}(\mathbb{R}_n^+)=S_{ev} -$\, это подпространство
 пространства Шварца \, $S(\mathbb{R}_n^+)$\,, состоящее из функций\,
$\varphi (x)$\,, чётных по каждой из своих переменных.

Пусть $\gamma=(\gamma_1,\ldots,\gamma_n)$ --- мультииндекс, состоящий из фиксированных положительных чисел, и $|\gamma|=\gamma_1+\ldots+\gamma_n$ его длина.

  Положим $\mathbf{j}_\gamma(x,\xi)=\prod\limits_{i=1}^nj_{\frac{\gamma_i-1}{2}}(x_i\,\xi_i),$ $|\xi|=1$, где $j_{\frac{\gamma_i-1}{2}}$  --- "малая"\  или "нормированная"\  функция Бесселя, которая  связана с "большой"\  функцией
Бесселя первого рода $J_{\gamma_i-1\over2}$ равенством
\begin{equation}\label{BessFunk}
j_\omega(t){=}\Gamma(\omega+1)\,\left(\frac{2}{t}\right)^{\omega}J_\omega(t),\quad\omega>0, \quad t\in\mathbb{R}_1.
\end{equation}

Прямое  и обратное преобразование Фурье--Бесселя для функции $f{\in} S_{ev}$ определяются по формулам
 (см. \cite{singul}):
$$
F_B[f](\xi)
=\widehat{f}(\xi) = \int_{\mathbb{R}_n^+}
\mathbf{j}_{\gamma}(x,\xi)\,f(x)\,x^\gamma\,\,dx\,,\qquad x^\gamma=\prod\limits_{i=1}^n x_i^{\gamma_i},
$$
$$
f(x)=F_B^{-1}[\widehat{f}(\xi)](x) =
\frac{2^{n-|\gamma|}}{\prod\limits_{j=1}^n\Gamma^2
\left(\gamma_j+1\over2\right)}\,F_B[\widehat{f}(\xi)](x).
$$

Часть шара  $|x|\leq r$, $|x|=\sqrt{x_1^2+...+x_n^2}$, принадлежащую $\mathbb{R}^+_n$, будем обозначать
$B_r^+(n)$. Граница $B_r^+(n)$ состоит из части сферы $S_r^+(n){=}\{x\in\mathbb{R}^+_n:|x|{=}r\}$ и из частей координатных гиперплоскостей $x_i{=}0$, $i{=}1,{\ldots},n$, таких что $|x|{\leq}r$\,.

Рассмотрим функцию $g(s)$ одной переменной $s>0$, чётную в смысле определения 1,  и пусть $\nu>0$.
В  \cite{Delsartes} определён, а в  \cite{levitan} (см. стр. 121 и далее) подробно изучен \emph{оператор обобщённого сдвига} вида
\begin{equation}\label{sdvig1}
^\nu \mathcal{T}^t_s g(s)=\frac{\Gamma\left(\frac{\nu+1}{2}\right)}{\sqrt{\pi}\Gamma\left(\frac{\nu}{2}\right)}\int\limits_0^\pi
g(\sqrt{s^2+t^2-2st\cos{\varphi}})\sin^{\nu-1}{\varphi}d\varphi.
\end{equation}
Если рассматриваемая функция $g(s)\in C^2(\mathbb{R}_1^+)$, то $\Phi(s,t)=\,^\nu \mathcal{T}^t_s g(s)$ представляет собой единственное решение задачи Коши вида
$$
\frac{\partial^2 \Phi}{\partial s^2}+\frac{\nu}{s}\frac{\partial \Phi}{\partial s}=\frac{\partial^2 \Phi}{\partial t^2}+\frac{\nu}{t}\frac{\partial \Phi}{\partial t},
$$
$$
\Phi(s,0)=g(s),\qquad \frac{\partial \Phi}{\partial t}\biggr|_{t=0}=0.
$$
Оператор \eqref{sdvig1} заменой переменных сводится к виду
\begin{equation}\label{sdvig2}
^\nu \mathcal{T}^t_s g(s)=\frac{2}{(2st)^{\gamma-1}}\frac{\Gamma\left(\frac{\nu+1}{2}\right)}{\sqrt{\pi}\Gamma\left(\frac{\nu}{2}\right)}\int\limits_{|x-y|}^{x+y}zf(z)[(z^2-(x-y)^2)((x+y)^2-z^2)]^{\frac{\gamma}{2}-1}dz.
\end{equation}

Далее будем считать, что $x\in\mathbb{R}_n^+$, $f(x)$  --- интегрируемая с весом $x^\gamma=\prod\limits_{i=1}^n x_i^{\gamma_i}$ функция, чётная по каждой координате своего аргумента в смысле определения 1.   Рассмотрим многомерный обобщённый сдвиг
$$T^y f(x) =\prod_{i=1}^n (^{\gamma_i}\mathcal{T}_{x_i}^{y_i}
f)(x),
$$
где каждый из одномерных обобщенных сдвигов $^{\gamma_i}\mathcal{T}_{x_i}^{y_i}$ определен
формулами \eqref{sdvig1} или \eqref{sdvig2}.

\emph{Весовое сферическое среднее}, порождённое обобщенным сдвигом, имеет вид
\begin{equation}\label{EQ4}
M_r^\gamma f(x)=M^\gamma_f(x;r)=\frac{1}{|S_1^+(n)|_\gamma}\int\limits_{S^+_1(n)}T^{ry}f(x)y^\gamma
dS(y),\qquad  r>0,
\end{equation}
где коэффициент $|S^+_1(n)|_\gamma$  вычисляется по формуле
\begin{equation}\label{PlSh}
|S_1^+(n)|_\gamma
=\int\limits_{S^+_1(n)}\prod\limits_{i=1}^n x_i^{\gamma_i}dS(y)=
\frac{\prod\limits_{i=1}^n{\Gamma\left(\frac{\gamma_i{+}1}{2}\right)}}{2^{n-1}\Gamma\left(\frac{n{+}|\gamma|}{2}\right)}
\end{equation}
(см. \cite{LyahovKniga}, стр. 20, формула (1.2.5), где надо положить $N{=}n$).

Средние \eqref{EQ4}  изучались в  \cite{LPSh1}, \cite{LPSh2}, \cite{KiprZas}. В \cite{Lyah13} применялось весовое сферическое среднее, порожденное смешанным обобщённым сдвигом, который действовал только по одной переменной.

Многомерный оператор Пуассона определяется формулой (см. \cite{levitan}, \cite{singul})
\begin{equation}\label{Poison}
\mathcal{P}^\gamma_{x}f(x)=C(\gamma)\int\limits_0^\pi...\int\limits_0^\pi
f(x_1\cos\alpha_1,...,x_n\cos\alpha_n)
\prod\limits_{j=1}^n\sin^{\gamma_j-1}\alpha_j\,d\alpha_1{...}d\alpha_n,
\end{equation}
где
$$
C(\gamma)=\frac{\prod\limits^n_{i=1}\Gamma\left(\frac{\gamma_i+1}{2}\right)}{\sqrt{\pi}2^{n-1}\Gamma\left(\frac{|\gamma|+n-1}{2}\right)}.
$$

 Отметим, что для  интегрируемой по $B_R^+(n)$ с весом $x^\gamma$ функции $f(x)$ и для непрерывной при $t\in[0,\infty)$  функции одной переменной $g(t)$ посредством сферической замены переменных $x=\rho\theta$, $|x|=\rho$ получается  формула
 \begin{equation}\label{Ball}
\int\limits_{B_R^+(n)}g(|x|)f(x)\,x^\gamma\, dx=\int\limits_0^R g(\rho)\rho^{n+|\gamma|-1}d\rho \int\limits_{S_1^+(n)}f(\rho x)x^\gamma dS(x).
 \end{equation}
 Если в \eqref{Ball} положить $g(t)=1$ и продифференцировать обе части по $R$, то будем иметь
\begin{equation}\label{Ball1}
\int\limits_{S^+_1(n)}f(R x)x^\gamma dS(x)=R^{1-n-|\gamma|}\frac{d}{dR}\int\limits_{B_R^+(n)}f(x)x^\gamma dx.
 \end{equation}

Нам также потребуется  формула
\begin{equation}\label{VesPlVol}
\int\limits_{S_1^+(n)}\mathcal{P}_{\xi}^\gamma
g(\langle\xi, x\rangle)x^\gamma dS(x)
=C(\gamma)\int\limits_{-1}^1
g(|\xi|p)(1-p^2)^{\frac{n+|\gamma|-3}{2}}\:dp.
\end{equation}
 В книге \cite{LyahovKniga} (стр. 44, формула 1.7.13) приведена общая формула для случая, когда только часть переменных нагружены весами, из которой получается \eqref{VesPlVol} при отсутствии невесовых переменных. Отметим также, что общая формула, аналогичная формуле 1.7.13 из \cite{LyahovKniga}, получена  И.А. Киприяновым  и Л.А. Ивановым  в статье \cite{KiprIvanov} (формула (3) на стр. 57), однако в формуле (3) из \cite{KiprIvanov} отсутствует умножение на константу $C(\gamma)$.

\section{Итерированное весовое сферическое среднее и его свойства}

\emph{Итерированное весовое сферическое среднее} имеет вид (см. \cite{ShishGer}):
$$
I^{\,\gamma}_f(x;\lambda,\mu)=I^{\,\gamma}_{\lambda,\mu}f(x)=M^\gamma_\lambda M^\gamma_\mu f(x)=$$
$$
=\frac{1}{|S_1^+(n)|_\gamma^2}\int\limits_{S_1^+(n)}\int\limits_{S_1^+(n)}T_x^{\lambda\zeta}T_x^{\mu\xi}[f(x)]\zeta^\gamma
\xi^\gamma dS(\xi) dS(\zeta).
$$

Используя перестановочное свойство  обобщённого сдвига (см. \cite{levitan}, формула (7.1)), получим, что итерированное весовое сферическое среднее симметрично относительно $\lambda$ и $\mu$:
$$
I^{\,\gamma}_f(x;\lambda,\mu)=I^{\,\gamma}_f(x;\mu,\lambda).
$$

Очевидно, что
$$
I^{\,\gamma}_f(x;\lambda,0)=I^{\,\gamma}_f(x;0,\lambda)=M^\gamma_\lambda f(x)
$$
и
$$
I^{\,\gamma}_f(x;0,0)=f(x).
$$

 Следуя \cite{1} (см. стр. 73), докажем равенство, выражающее итерированное сферическое среднее $I^{\,\gamma}_f(x;\lambda,\mu)$ через однократный интеграл от весового сферического среднего $M^\gamma_f(x;r)$.

\begin{theorem}
 Пусть $f$ --- интегрируемая с весом $x^\gamma=\prod\limits_{i=1}^nx_i^{\gamma_i}$ функция, чётная по каждой из своих переменных в смысле определения 1. Тогда справедливы следующие формулы:
\begin{equation}\label{Transl}
I^{\,\gamma}_f(x;\lambda,\mu)=\,^\nu\mathcal{T}_\mu^\lambda M^\gamma_f(x;\mu),
\end{equation}
$$
I^{\,\gamma}_f(x;\lambda,\mu)=\frac{2\,\Gamma\left(\frac{n{+}|\gamma|}{2}\right)}
{\sqrt{\pi}\Gamma\left(\frac{|\gamma|+n-1}{2}\right)}\frac{1}{(2\lambda\mu)^{{n+|\gamma|-2}}}\times
$$
\begin{equation}\label{IterFor}
\times\int\limits_{\lambda-\mu}^{\lambda+\mu}
    \Bigl({\left(\lambda^2-(r-\mu)^2\right)\left((r+\mu)^2-\lambda^2\right)}
    \Bigr)^{\frac{n+|\gamma|-3}{2}}\,M^\gamma_f(x;r)\,rdr,
\end{equation}
$$
I^{\,\gamma}_f\left(x;\frac{\beta-\alpha}{2},\frac{\beta+\alpha}{2}\right)=
$$
\begin{equation}\label{05}
=\frac{\Gamma\left(\frac{n{+}|\gamma|}{2}\right)}{\sqrt{\pi}\Gamma\left(\frac{|\gamma|+n-1}{2}\right)}
\frac{2^{{n+|\gamma|-1}}}{(\beta^2-\alpha^2)^{{n+|\gamma|-2}}}\int\limits_{\alpha}^{\beta}
    \Bigl({(\beta^2-r^2)(r^2-\alpha^2)}\Bigr)^{\frac{n+|\gamma|-3}{2}}M^\gamma_f(x;r)rdr.
\end{equation}
\end{theorem}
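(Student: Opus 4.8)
The plan is to establish the first identity \eqref{Transl} and then to read off \eqref{IterFor} and \eqref{05} from it by purely algebraic manipulations. Throughout I set $\nu = n+|\gamma|-1$, so that the shift constant $\frac{\Gamma(\frac{\nu+1}{2})}{\sqrt{\pi}\,\Gamma(\frac{\nu}{2})}$ becomes exactly the factor $\frac{\Gamma(\frac{n+|\gamma|}{2})}{\sqrt{\pi}\,\Gamma(\frac{|\gamma|+n-1}{2})}$ of the statement, and I write $\Delta_\gamma = \sum_{i=1}^n\bigl(\frac{\partial^2}{\partial x_i^2}+\frac{\gamma_i}{x_i}\frac{\partial}{\partial x_i}\bigr)$ for the Bessel Laplacian, whose one--dimensional pieces are the operators $B_{\gamma_i}$ attached to $^{\gamma_i}\mathcal{T}^{y_i}_{x_i}$.

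I would prove \eqref{Transl} by a uniqueness argument for the singular Euler--Poisson--Darboux Cauchy problem. Two facts are used: (i) the weighted mean $u(x,r)=M^\gamma_f(x;r)$ solves, as a function of $r$, the equation $\Delta_\gamma u = \frac{\partial^2 u}{\partial r^2}+\frac{\nu}{r}\frac{\partial u}{\partial r}$ with $u(x,0)=f(x)$ and $\frac{\partial u}{\partial r}\big|_{r=0}=0$; and (ii) the generalized translation $T^y$, hence every $M^\gamma_r$, commutes with $\Delta_\gamma$. Writing $I^\gamma_f(x;\lambda,\mu)=M^\gamma_\lambda\bigl[M^\gamma_\mu f\bigr](x)$ and applying (i)--(ii) in each radial variable (treating $M^\gamma_\mu f$, which is again even and smooth, as the initial function for the $\lambda$--mean), I obtain that $\Phi(\mu,\lambda):=I^\gamma_f(x;\lambda,\mu)$ satisfies $\Delta_\gamma\Phi = \bigl(\frac{\partial^2}{\partial\lambda^2}+\frac{\nu}{\lambda}\frac{\partial}{\partial\lambda}\bigr)\Phi = \bigl(\frac{\partial^2}{\partial\mu^2}+\frac{\nu}{\mu}\frac{\partial}{\partial\mu}\bigr)\Phi$, i.e. precisely the equation of the excerpt with $(s,t)=(\mu,\lambda)$. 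The Cauchy data also match: $\Phi(\mu,0)=I^\gamma_f(x;0,\mu)=M^\gamma_f(x;\mu)$ and $\frac{\partial\Phi}{\partial\lambda}\big|_{\lambda=0}=0$. Since the excerpt already records that $^\nu\mathcal{T}^\lambda_\mu M^\gamma_f(x;\mu)$ solves the same problem with the same data, uniqueness of the solution of the singular Cauchy problem (here $\nu>0$) forces the two to coincide, which is \eqref{Transl}.

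To pass from \eqref{Transl} to \eqref{IterFor} I would substitute into the right--hand side the second representation \eqref{sdvig2} of the generalized shift (the change of variable $z=\sqrt{\mu^2+\lambda^2-2\mu\lambda\cos\varphi}$ turns \eqref{sdvig1} into \eqref{sdvig2}), taking $g=M^\gamma_f(x;\cdot)$, $s=\mu$, $t=\lambda$, index $\nu=n+|\gamma|-1$. This reproduces the prefactor $\frac{2\Gamma(\frac{n+|\gamma|}{2})}{\sqrt\pi\,\Gamma(\frac{|\gamma|+n-1}{2})}(2\lambda\mu)^{-(n+|\gamma|-2)}$ and the integrand power $\bigl([r^2-(\lambda-\mu)^2][(\lambda+\mu)^2-r^2]\bigr)^{\frac{n+|\gamma|-3}{2}}$; the elementary factorization $[r^2-(\lambda-\mu)^2][(\lambda+\mu)^2-r^2]=(\lambda^2-(r-\mu)^2)((r+\mu)^2-\lambda^2)$ (both equal $(r-\lambda+\mu)(r+\lambda-\mu)(\lambda+\mu-r)(\lambda+\mu+r)$) identifies it with \eqref{IterFor}. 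Finally \eqref{05} is the specialization $\lambda=\frac{\beta+\alpha}{2}$, $\mu=\frac{\beta-\alpha}{2}$ of \eqref{IterFor}, permitted by the symmetry $I^\gamma_f(x;\lambda,\mu)=I^\gamma_f(x;\mu,\lambda)$: then $2\lambda\mu=\frac{\beta^2-\alpha^2}{2}$, the limits become $\lambda-\mu=\alpha$ and $\lambda+\mu=\beta$, the product above collapses to $(\beta^2-r^2)(r^2-\alpha^2)$, and collecting the factors of $2$ produces the constant $\frac{\Gamma(\frac{n+|\gamma|}{2})}{\sqrt\pi\,\Gamma(\frac{|\gamma|+n-1}{2})}\cdot\frac{2^{n+|\gamma|-1}}{(\beta^2-\alpha^2)^{n+|\gamma|-2}}$ of \eqref{05}.

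The main obstacle is the rigorous justification of (i)--(ii) and of uniqueness in the Bessel setting: one must carry $\Delta_\gamma$ under the integral defining $M^\gamma_r$, check that $M^\gamma_f(x;r)$ is $C^2$ in $r$ so that the classical EPD uniqueness applies, and confirm the vanishing of the first $r$--derivative at $r=0$ — this is where evenness of $f$ in each $x_i$ and membership in $S_{ev}$ enter. An equivalent route that avoids the PDE theory is to apply the transform $F_B$ in $x$ to both sides of \eqref{Transl}: with the multiplier identity $F_B[M^\gamma_r f](\xi)=j_{\frac{n+|\gamma|-2}{2}}(r|\xi|)\,\widehat f(\xi)$ (obtainable from \eqref{VesPlVol}) and the product formula for normalized Bessel functions — which is exactly the assertion that $^\nu\mathcal{T}^\lambda_\mu$ sends $j_{\frac{n+|\gamma|-2}{2}}(\mu|\xi|)$ to $j_{\frac{n+|\gamma|-2}{2}}(\lambda|\xi|)\,j_{\frac{n+|\gamma|-2}{2}}(\mu|\xi|)$ — both sides acquire the transform $j_{\frac{n+|\gamma|-2}{2}}(\lambda|\xi|)\,j_{\frac{n+|\gamma|-2}{2}}(\mu|\xi|)\,\widehat f(\xi)$, and injectivity of $F_B$ concludes.
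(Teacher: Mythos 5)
Your argument is correct, but it runs in the opposite direction from the paper's. The paper first establishes \eqref{IterFor} by a duality computation: it pairs $I^{\,\gamma}_f(x;\lambda,\mu)$ against an arbitrary test function $\lambda^{n+|\gamma|-1}g(\lambda)$, uses self-adjointness, associativity and commutativity of the generalized translation to move the shift onto $g(|z|)$, passes to spherical coordinates, recognizes the Poisson operator \eqref{Poison}, reduces the inner spherical integral to a one-dimensional one via \eqref{VesPlVol}, and changes variables $p\mapsto\lambda$; arbitrariness of $g$ then yields \eqref{IterFor}, after which \eqref{Transl} is simply read off by recognizing the kernel as the representation \eqref{sdvig2} with $\nu=n+|\gamma|-1$, and \eqref{05} follows by the same substitution you use. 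You instead prove \eqref{Transl} first, by showing that $I^{\,\gamma}_f(x;\lambda,\mu)$ satisfies the singular Cauchy problem that (as the paper itself records after \eqref{sdvig1}) uniquely characterizes $^\nu\mathcal{T}^\lambda_\mu$ applied to $M^\gamma_f(x;\mu)$, and then obtain \eqref{IterFor} by substituting \eqref{sdvig2}; your algebra for the prefactors, the factorization of the quartic, and the passage to \eqref{05} all check out (the only cosmetic point, which the paper also flags, is that the lower limit $\lambda-\mu$ versus $|\lambda-\mu|$ is immaterial because the integrand is odd in $r$ times an even weight). What each approach buys: the paper's computation is self-contained at the level of integral identities, produces the explicit constants directly, and yields the product formula $\,^\nu\mathcal{T}_\mu^\lambda j_{\frac{\nu-1}{2}}(\mu)=j_{\frac{\nu-1}{2}}(\mu)j_{\frac{\nu-1}{2}}(\lambda)$ as a corollary rather than assuming it; your route is shorter and more conceptual but must import the Darboux-type equation for the weighted spherical mean, the commutation of $T^y$ with $\Delta_\gamma$, and uniqueness for the singular Cauchy problem from the Kipriyanov--Levitan literature — these are true and citable, but they are exactly the facts you flag as the "main obstacle," and they are not proved in this paper. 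Your alternative Fourier--Bessel argument is essentially the paper's corollary run backwards: it is not circular provided you take the product formula for normalized Bessel functions as known from Levitan (p.~124) rather than deducing it from the theorem, and you should say so explicitly.
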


\begin{proof}
Пусть $g(s)$ --- произвольная непрерывная финитная функция одной переменной. Рассмотрим интеграл
$$
J=\int\limits_0^{+\infty}\lambda^{n+|\gamma|-1}
g(\lambda)I^{\,\gamma}_f(x;\lambda,\mu)d\lambda=
$$
$$
=\frac{1}{|S_1^+(n)|_\gamma^2}\int\limits_0^{+\infty}\lambda^{n+|\gamma|-1}
g(\lambda) d\lambda\int\limits_{S_1^+(n)}\int\limits_{S_1^+(n)}T_x^{\lambda\zeta}T_x^{\mu\xi}[f(x)]\zeta^\gamma
\xi^\gamma dS(\xi) dS(\zeta).
$$
Применяя перестановочное свойство  обобщённого сдвига (7.1) из \cite{levitan} и формулу \eqref{Ball}, запишем это выражение в виде
$$
J=\frac{1}{|S_1^+(n)|_\gamma^2}\int\limits_{S_1^+(n)}T_x^{\mu\xi}\left[\int\limits_0^{+\infty}\lambda^{n+|\gamma|-1}
g(\lambda)d\lambda\int\limits_{S_1^+(n)}T_x^{\lambda\zeta}[f(x)]\zeta^\gamma
 dS(\zeta)\right]\xi^\gamma dS(\xi)=
$$
$$
=\frac{1}{|S_1^+(n)|_\gamma^2}\int\limits_{S_1^+(n)}T_x^{\mu\xi}\left[\lim\limits_{R\rightarrow +\infty}\int\limits_0^{R}\lambda^{n+|\gamma|-1}
g(\lambda)d\lambda\int\limits_{S_1^+(n)}T_x^{\lambda\zeta}[f(x)]\zeta^\gamma
 dS(\zeta)\right]\xi^\gamma dS(\xi)=
$$
$$
=\frac{1}{|S_1^+(n)|_\gamma^2}\int\limits_{S_1^+(n)}T_x^{\mu\xi}\left[\lim\limits_{R\rightarrow +\infty}\int\limits_{B_R^+(n)}T_x^{z}[f(x)]g(|z|)z^\gamma
 dz\right]\xi^\gamma dS(\xi)=
$$
$$
=\frac{1}{|S_1^+(n)|_\gamma^2}\int\limits_{S_1^+(n)}T_x^{\mu\xi}\left[\int\limits_{\mathbb{R}^+_n}T_x^{z}[f(x)]g(|z|)z^\gamma
 dz\right]\xi^\gamma dS(\xi).
$$

Теперь, применяя свойства ассоциативности и самосопряжённости обобщённого сдвига (см. \cite{levitan}, формулы (7.3) и (7.4), соответственно), получим
$$
J=\frac{1}{|S_1^+(n)|_\gamma^2}\int\limits_{S_1^+(n)}\int\limits_{\mathbb{R}_n^+}\left( T_z^{\mu\xi}T_z^x[f(z)]\right)\,g(|z|)z^\gamma
dz\,\,\xi^\gamma dS(\xi)=
$$
$$
=\frac{1}{|S_1^+(n)|_\gamma^2}\int\limits_{S_1^+(n)}\int\limits_{\mathbb{R}_n^+}T_z^{x}[f(z)]\,\,T_z^{\mu\xi}
[g(|z|)]\,z^\gamma dz\,\,\xi^\gamma dS(\xi)=
$$
$$
=\frac{C(\gamma)}{|S_1^+(n)|_\gamma^2}\int\limits_{S_1^+(n)}\int\limits_{\mathbb{R}_n^+}T_x^{z}[f(x)]\int\limits_0^\pi...\int\limits_0^\pi
\prod\limits_{i=1}^n\sin^{\gamma_i-1}\alpha_i\times
$$
$$
\times
g(\sqrt{\mu^2\xi_1^2+...+\mu^2\xi_n^2+z_1^2+...+z_n^2-2\mu\xi_1z_1\cos\alpha_1-...-2\mu\xi_nz_n\cos\alpha_n})\times
$$
$$
\times d\alpha_1...d\alpha_n\, z^\gamma dz\,\,\xi^\gamma dS(\xi).
$$
В интеграле по $z$ перейдем к сферическим координатам $z=r\eta,$ $|\eta|=1$, $r\geq 0$. Учитывая, что $|\xi|=1$, получим
$$
J=\frac{C(\gamma)}{|S_1^+(n)|_\gamma^2}\int\limits_{S_1^+(n)}\int\limits_{0}^{+\infty}r^{n+|\gamma|-1}
dr\int\limits_{S_1^+(n)}T^{r\eta}_x[f(x)]\times
$$
$$\times\int\limits_0^\pi...\int\limits_0^\pi
\prod\limits_{i=1}^n\sin^{\gamma_i-1}\alpha_i
g(\sqrt{\mu^2+r^2-2r\mu\langle \xi,\eta\cos\alpha\rangle })d\alpha_1...d\alpha_n\,
\eta^\gamma dS(\eta)\,\,\xi^\gamma dS(\xi),
$$
где
$$
\langle \xi,\eta\cos\alpha\rangle=\xi_1\eta_1\cos\alpha_1+...+\xi_n\eta_n\cos\alpha_n.
$$
Используя многомерный оператор Пуассона \eqref{Poison}, интеграл $J$ запишем в виде
$$
J=\frac{1}{|S_1^+(n)|_\gamma^2}\int\limits_{0}^{+\infty}r^{n+|\gamma|-1}dr\int\limits_{S_1^+(n)}T^{r\eta}_{x}[f(x)]\eta^\gamma dS(\eta)\times$$
$$\times
\int\limits_{S_1^+(n)}\mathcal{P}_\eta^\gamma g(\sqrt{r^2+\mu^2-2r\mu\langle
\xi,\eta\rangle })\,\,\xi^\gamma
dS(\xi).
$$
Применяя к интегралу
$$\int\limits_{S_1^+(n)}\mathcal{P}_\eta^\gamma g(\sqrt{r^2+\mu^2-2r\mu\langle
\xi,\eta\rangle })\,\,\xi^\gamma
dS(\xi)
$$
формулу \eqref{VesPlVol} и учитывая, что $|\eta|=1$,  получим
$$
J=\frac{\prod\limits^n_{i=1}\Gamma\left(\frac{\gamma_i+1}{2}\right)}{\sqrt{\pi}2^{n-1}\Gamma\left(\frac{|\gamma|+n-1}{2}\right)}\frac{1}{|S_1^+(n)|_\gamma^2}
\int\limits_{0}^{+\infty}r^{n+|\gamma|-1}dr \int\limits_{S_1^+(n)} T^{r\eta}_x[f(x)]\eta^\gamma
dS(\eta)\times
$$
$$
\times\int\limits_{-1}^1
    (1-p^2)^{\frac{n+|\gamma|-3}{2}}g(\sqrt{r^2+\mu^2-2r\mu p})\,\,dp=
$$
$$
=\frac{\prod\limits^n_{i=1}\Gamma\left(\frac{\gamma_i+1}{2}\right)}{\sqrt{\pi}2^{n-1}\Gamma\left(\frac{|\gamma|+n-1}{2}\right)}\frac{1}{|S_1^+(n)|_\gamma}
\int\limits_{0}^{+\infty}r^{n+|\gamma|-1}\,M^\gamma_f(x;r)\,dr\times
$$
$$
\times\int\limits_{-1}^1
    (1-p^2)^{\frac{n+|\gamma|-3}{2}}g(\sqrt{r^2+\mu^2-2r\mu p})\,\,dp.
$$
Теперь вместо переменной $p$ введем переменную $\lambda$, связанную с $p$ формулой
$$r^2+\mu^2-2r\mu p=\lambda^2.$$
Вычисляем значения
$$p=\frac{r^2+\mu^2-\lambda^2}{2r\mu},
dp=-\frac{\lambda}{r\mu}\ d\lambda,\ \
1-p^2=\frac{(\lambda^2-(r-\mu)^2)((r+\mu)^2-\lambda^2)}{(2r\mu)^2},
$$
и при $p=-1$ получаем $\lambda=|r+\mu|$, а при $p=1$ получаем $\lambda=|r-\mu|$. Тогда,  используя \eqref{PlSh}, выводим
$$
J=(2\mu)^{{2-n-|\gamma|}}\frac{2\,\Gamma\left(\frac{n{+}|\gamma|}{2}\right)}{\sqrt{\pi}\Gamma\left(\frac{|\gamma|+n-1}{2}\right)}
\times
$$
$$\times\int\limits_{0}^{+\infty}g(\lambda)\,\lambda\, d\lambda\int\limits_{|\lambda-\mu|}^{|\lambda+\mu|}
    \left({(\lambda^2-(r-\mu)^2)((r+\mu)^2-\lambda^2)}\right)^{\frac{n+|\gamma|-3}{2}}\,M^\gamma_f(x;r)\,rdr.
$$
Поскольку $g(\lambda)$ --- произвольная функция, то из того что
$$
\int\limits_0^{+\infty}\lambda^{n+|\gamma|-1}
g(\lambda)I^{\,\gamma}_f(x;\lambda,\mu)d\lambda=
(2\mu)^{{2-n-|\gamma|}}\frac{2\,\Gamma\left(\frac{n{+}|\gamma|}{2}\right)}{\sqrt{\pi}\Gamma\left(\frac{|\gamma|+n-1}{2}\right)}
\times
$$
$$\times\int\limits_{0}^{+\infty}g(\lambda)\,\lambda\, d\lambda\int\limits_{\lambda-\mu}^{\lambda+\mu}
    \Bigl({(\lambda^2-(r-\mu)^2)((r+\mu)^2-\lambda^2)}\Bigr)^{\frac{n+|\gamma|-3}{2}}\,M^\gamma_f(x;r)\,rdr
$$
следует, что
$$
I^{\,\gamma}_f(x;\lambda,\mu)=\frac{2\,\Gamma\left(\frac{n{+}|\gamma|}{2}\right)}{\sqrt{\pi}\Gamma\left(\frac{|\gamma|+n-1}{2}\right)}\times
$$
\begin{equation}\label{04}
\times\frac{1}{(2\lambda\mu)^{{n+|\gamma|-2}}}\int\limits_{\lambda-\mu}^{\lambda+\mu}
    \Bigl({(\lambda^2-(r-\mu)^2)((r+\mu)^2-\lambda^2)}\Bigr)^{\frac{n+|\gamma|-3}{2}}\,M^\gamma_f(x;r)\,rdr.
\end{equation}
Мы убрали модули в пределах интегрирования в силу того, что подынтегральная функция нечетна по $r$.

Теперь, используя представление \eqref{sdvig2} обобщённого сдвига  при $\nu{=}n{+}|\gamma|{-}1$, получим
$$
I^{\,\gamma}_f(x;\lambda,\mu)=\,^\nu\mathcal{T}_\mu^t M^\gamma_f(x;\mu).
$$

Если в формуле \eqref{04} мы положим $\alpha=\lambda-\mu$, $\beta=\lambda+\mu$, ($\beta>\alpha$), то можем записать
$$
I^{\,\gamma}_f\left(x;\frac{\beta-\alpha}{2},\frac{\beta+\alpha}{2}\right)=
$$
$$
=\frac{\Gamma\left(\frac{n{+}|\gamma|}{2}\right)}{\sqrt{\pi}\Gamma\left(\frac{|\gamma|+n-1}{2}\right)}
\frac{2^{{n+|\gamma|-1}}}{(\beta^2-\alpha^2)^{{n+|\gamma|-2}}}\int\limits_{\alpha}^{\beta}
    \Bigl({(\beta^2-r^2)(r^2-\alpha^2)}\Bigr)^{\frac{n+|\gamma|-3}{2}}M^\gamma_f(x;r)rdr.
$$
Теорема доказана.
\end{proof}

Из доказанной теоремы вытекают формулы, выражающие действие итерированных весовых средних на функции Бесселя.
Для этого в теореме 1 выберем $f(x)=\mathbf{j}_\gamma(x,\xi)$.
Далее, в \cite{LPSh1} показано, что
$$
M^\gamma_\mu{\bf j}_\gamma(x,\xi)
={\bf j}_\gamma(x,\xi)\,\,j_{\frac{n+|\gamma|-2}{2}}(\mu).
$$
Тогда
$$
I^{\,\gamma}_{\lambda,\mu}{\bf j}_\gamma(x,\xi)= {\bf j}_\gamma(x,\xi)\,\,j_{\frac{n+|\gamma|-2}{2}}(\mu)\,\,j_{\frac{n+|\gamma|-2}{2}}(\lambda).
$$

Если для $I^{\,\gamma}_{\lambda,\mu}{\bf j}_\gamma(x,\xi)$ записать равенство \eqref{Transl}, то получим известную формулу
$$
\,^\nu\mathcal{T}_\mu^\lambda j_{\frac{\nu-1}{2}}(\mu)= j_{\frac{\nu-1}{2}}(\mu)\,\,j_{\frac{\nu-1}{2}}(\lambda),\qquad \nu=n+|\gamma|-1,
$$
полученную в \cite{levitan} (см. стр. 124), где она выводится из анализа дифференциального уравнения, которому удовлетворяет обобщённый сдвиг. Мы же получили прямое доказательство этой формулы, выведя её непосредственно из теоремы 1 с помощью весовых сферических средних.

\section{Применения тождества для итерированного сферического среднего к задаче компьютерной томографии}

Рассмотрим  применение формулы \eqref{IterFor} из полученной выше теоремы 1 к компьютерной томографии. В задачах  дифракционной томографии и обратного рассеяния преобразование Фурье--Бесселя (Ханкеля) функции представляет собой измеренные данные (см., например, \cite{Higgins},  \cite{Kuchment} p. 126, \cite{Halliwell} p. 90). Подобные формулы используются для восстановления функции по её известным сферическим средним.

Докажем формулу, выражающую функцию через ее преобразование Фурье--Бесселя и обобщённый сдвиг. Это обобщение известной формулы для более простой задачи, в которой используется представление функции через её преобразование Фурье и обычный сдвиг \cite{Beylkin}. В такой форме подобные представления используются для восстановления функции в указанных задачах томографии и интегральной геометрии.

Отметим, что в  теории рассеяния поверхность шара $|x|<2\lambda$, которая далее используется при интегрировании,  называется сферой Эвальда (см. \cite{Ewald}).

\begin{theorem} Пусть преобразование Фурье--Бесселя $\widehat{F}$ --- функция с носителем внутри части шара
$$B_{2\lambda}(n)^+=\{x\in\mathbb{R}_n^+:|x|<2\lambda\}.$$
Тогда  справедливо равенство
\begin{equation}\label{teo2}
F(y)=C(n,\gamma)\int\limits_{S^+_1(n)}\int\limits_{S^+_1(n)}T_{\lambda\xi}^{\lambda\zeta}\left[\frac{|\lambda\xi|\widehat{F}(\lambda\xi){\bf j}_\gamma(\lambda\xi,y)}{(4\lambda^2-|t\xi|^2)^{\frac{n+|\gamma|-3}{2}}}\right]\zeta^\gamma
\xi^\gamma dS(\xi) dS(\zeta),
\end{equation}
$$
C(n,\gamma)=\frac{\sqrt{\pi}2^{2n-3}\lambda^{{2n+2|\gamma|-4}}\Gamma\left(\frac{|\gamma|+n-1}{2}\right)}{\Gamma^2\left(\frac{n{+}|\gamma|}{2}\right)\prod\limits_{j=1}^n\Gamma
\left(\frac{\gamma_j+1}{2}\right)|S_1^+(n)|_\gamma^2}.
$$
\end{theorem}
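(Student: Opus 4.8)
The plan is to read the double integral on the right of \eqref{teo2} as an iterated weighted spherical mean evaluated at the origin and then to collapse it, by means of formula \eqref{IterFor}, back onto the inverse Bessel--Fourier transform of $\widehat F$. Introduce the auxiliary function
\[
G(u)=\frac{|u|\,\widehat F(u)\,\mathbf j_\gamma(u,y)}{\left(4\lambda^2-|u|^2\right)^{\frac{n+|\gamma|-3}{2}}},\qquad u\in\mathbb R^+_n,
\]
so that the bracketed expression in \eqref{teo2} is exactly $G(\lambda\xi)$. Since at the origin the generalized translation reduces to evaluation, $T^{z}_x h(x)\big|_{x=0}=h(z)$ (this is \eqref{sdvig1} at $s=0$, where the normalizing constant integrates to $1$), the inner factor $T^{\lambda\zeta}_{\lambda\xi}[G(\lambda\xi)]$ equals $(T^{\lambda\zeta}G)(\lambda\xi)$, and after relabelling $\xi\leftrightarrow\zeta$ the whole double integral is recognized as $|S_1^+(n)|_\gamma^2\,I^{\,\gamma}_G(0;\lambda,\lambda)$. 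Thus I would first reduce the right-hand side of \eqref{teo2} to $C(n,\gamma)\,|S_1^+(n)|_\gamma^2\,I^{\,\gamma}_G(0;\lambda,\lambda)$.

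Next I would apply \eqref{IterFor} to $I^{\,\gamma}_G(0;\lambda,\lambda)$, i.e. to the case of equal radii $\lambda=\mu$. Here the integration runs over $r\in[0,2\lambda]$ and the kernel factorizes: $\left(\lambda^2-(r-\lambda)^2\right)\left((r+\lambda)^2-\lambda^2\right)=r^2(4\lambda^2-r^2)$, so that $\bigl(\cdots\bigr)^{\frac{n+|\gamma|-3}{2}}r\,dr=r^{\,n+|\gamma|-2}\left(4\lambda^2-r^2\right)^{\frac{n+|\gamma|-3}{2}}dr$ and the prefactor becomes $(2\lambda^2)^{-(n+|\gamma|-2)}$ times the constant of \eqref{IterFor}. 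The point of the construction is that the spherical mean of $G$ at the origin,
\[
M^\gamma_G(0;r)=\frac{r}{\left(4\lambda^2-r^2\right)^{\frac{n+|\gamma|-3}{2}}}\,\frac{1}{|S_1^+(n)|_\gamma}\int\limits_{S_1^+(n)}\widehat F(r\omega)\,\mathbf j_\gamma(r\omega,y)\,\omega^\gamma\,dS(\omega),
\]
carries precisely the reciprocal of the kernel together with the extra factor $r=|u|$, so these cancel against the weight in the radial integral and leave the clean Euclidean density $r^{\,n+|\gamma|-1}$.

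It remains to reassemble the radial and spherical integrations. By \eqref{Ball} with $g\equiv1$,
\[
\int\limits_0^{2\lambda}r^{\,n+|\gamma|-1}dr\int\limits_{S_1^+(n)}\widehat F(r\omega)\,\mathbf j_\gamma(r\omega,y)\,\omega^\gamma\,dS(\omega)=\int\limits_{B^+_{2\lambda}(n)}\widehat F(\xi)\,\mathbf j_\gamma(\xi,y)\,\xi^\gamma\,d\xi,
\]
and because $\widehat F$ is supported in $B^+_{2\lambda}(n)$ the last integral is over all of $\mathbb R^+_n$; by the inversion formula for $F_B$ it equals $\dfrac{\prod_{j}\Gamma^2\!\left(\frac{\gamma_j+1}{2}\right)}{2^{\,n-|\gamma|}}\,F(y)$. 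Substituting back, $I^{\,\gamma}_G(0;\lambda,\lambda)$ is a constant multiple of $F(y)$, and \eqref{teo2} reduces to the assertion that $C(n,\gamma)$ equals the reciprocal of that constant; evaluating it with the help of \eqref{PlSh} for $|S_1^+(n)|_\gamma$ yields the stated expression for $C(n,\gamma)$.

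The main obstacle is the bookkeeping together with the legitimacy of the evaluation-at-origin step and the interchange of the spherical and radial integrations: one must know that the generalized translations combine in the way used and that Fubini applies, for which some regularity of $\widehat F$ (say continuity together with the compact support) is needed. A secondary point requiring care is the exponent $\frac{n+|\gamma|-3}{2}$, which is negative when $n+|\gamma|<3$; the factor $\left(4\lambda^2-r^2\right)^{\frac{n+|\gamma|-3}{2}}$ is then singular at the upper endpoint $r=2\lambda$, but since it cancels identically against the denominator of $G$ the radial integral is in fact regular, so the formal cancellation can be made rigorous by treating the endpoint as a removable limit.
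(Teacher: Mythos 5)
Your proposal is correct and follows essentially the same route as the paper's own proof: the auxiliary function $G$ is exactly the paper's $f_y$, and the argument proceeds via the equal-radii case of \eqref{IterFor}, the cancellation of the kernel $\left(4\lambda^2-r^2\right)^{\frac{n+|\gamma|-3}{2}}$ against the denominator of $G$, formula \eqref{Ball}, the support hypothesis, and the inversion formula for $F_B$. The only difference is the direction of presentation (you start from the right-hand side of \eqref{teo2}, the paper starts from $I^{\,\gamma}_f(0;\lambda,\lambda)$), and your remarks on Fubini and the removable endpoint singularity are sensible additions the paper leaves implicit.
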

\begin{proof}
В  \eqref{IterFor} положим $\mu=\lambda$, получим
$$
I^{\,\gamma}_f(x;\lambda,\lambda)=
$$
\begin{equation}\label{Itermeans1}
=\frac{2\,\Gamma\left(\frac{n{+}|\gamma|}{2}\right)}{\sqrt{\pi}\Gamma\left(\frac{|\gamma|+n-1}{2}\right)}\frac{1}{(2\lambda^2)^{{n+|\gamma|-2}}}\int\limits_{0}^{2\lambda}
    \left[4\lambda^2{-}r^2\right]^{\frac{n{+}|\gamma|{-}3}{2}}\,r^{n{+}|\gamma|{-}2}\,M^\gamma_f(x;r)dr.
\end{equation}

Рассмотрим функцию
$$
f_y(x)=\frac{|x|}{(4\lambda^2-|x|^2)^{\frac{n+|\gamma|-3}{2}}}\widehat{F}(x){\bf j}_\gamma(x,y),\qquad x,y\in\mathbb{R}_n^+.
$$
Найдем её весовое сферическое среднее
$f_y(x)$ при $x=0$:
$$
M^\gamma_r f_y(x)|_{x=0}=\frac{1}{|S_1^+(n)|_\gamma}\int\limits_{S_1^+(n)}[T_x^{rz}f_y(x)]_{x=0}z^\gamma
dS(z)=
$$
$$
=\frac{1}{|S_1^+(n)|_\gamma}\int\limits_{S_1^+(n)}\left[T_x^{rz}\frac{|x|}{(4\lambda^2-|x|^2)^{\frac{n+|\gamma|-3}{2}}}\widehat{F}(x){\bf j}_\gamma(x,y)\right]_{x=0}z^\gamma
dS(z)=
$$
$$
=\frac{1}{|S_1^+(n)|_\gamma}\int\limits_{S_1^+(n)}\frac{r|z|}{(4\lambda^2-r|z|^2)^{\frac{n+|\gamma|-3}{2}}}\widehat{F}(rz){\bf j}_\gamma(rz,y)z^\gamma
dS(z)=
$$
$$
=\frac{1}{|S_1^+(n)|_\gamma}\frac{r}{(4\lambda^2-|r|^2)^{\frac{n+|\gamma|-3}{2}}}\int\limits_{S_1^+(n)}\widehat{F}(rz){\bf j}_\gamma(rz,y)z^\gamma
dS(z).
$$
Теперь, используя \eqref{Itermeans1}, найдем
$$
I_f^\gamma(0;\lambda,\lambda)=
$$
$$
=\frac{2\,\Gamma\left(\frac{n{+}|\gamma|}{2}\right)}
{\sqrt{\pi}\Gamma\left(\frac{|\gamma|+n-1}{2}\right)}
\frac{1}{(2\lambda^2)^{{n+|\gamma|-2}}}\,\int\limits_{0}^{2\lambda}
    \left[4\lambda^2-r^2\right]^{\frac{n+|\gamma|-3}{2}}\,r^{n+|\gamma|-2}\,M_f^\gamma(0,r)dr{=}
$$
$$
{=}\frac{2\,\Gamma\left(\frac{n{+}|\gamma|}{2}\right)}
{\sqrt{\pi}\Gamma\left(\frac{|\gamma|+n-1}{2}\right)}\,
\frac{(2\lambda^2)^{2-n-|\gamma|}}{|S_1^+(n)|_\gamma}\int\limits_{0}^{2\lambda}
    \left[4\lambda^2-r^2\right]^{\frac{n+|\gamma|-3}{2}}
    \,r^{n+|\gamma|-2}\frac{r}{(4\lambda^2-r^2)^{\frac{n+|\gamma|-3}{2}}}\,dr\times
$$
$$   \times\int\limits_{S_1^+(n)}\widehat{F}(rz){\bf j}_\gamma(rz,y)z^\gamma
dS(z)=
$$
$$
=\frac{2\,\Gamma\left(\frac{n{+}|\gamma|}{2}\right)}{\sqrt{\pi}\Gamma
\left(\frac{|\gamma|+n-1}{2}\right)}\,\frac{(2\lambda^2)^{2-n-|\gamma|}}
{|S_1^+(n)|_\gamma}\int\limits_{0}^{2\lambda}
    \,r^{n+|\gamma|-1}\,dr\int\limits_{S_1^+(n)}\widehat{F}(rz){\bf j}_\gamma(rz,y)z^\gamma
dS(z).
$$
Применяя формулу \eqref{Ball}, получим

$$
I_f^\gamma(0;\lambda,\lambda)=\frac{2\,\Gamma\left(\frac{n{+}|\gamma|}{2}\right)}{\sqrt{\pi}\Gamma\left(\frac{|\gamma|+n-1}{2}\right)}\,\frac{(2\lambda^2)^{2-n-|\gamma|}}{|S_1^+(n)|_\gamma}\int\limits_{B_{2\lambda}^+}\widehat{F}(z){\bf j}_\gamma(z,y)z^\gamma
dz=
$$
$$
=\frac{2\,\Gamma\left(\frac{n{+}|\gamma|}{2}\right)}{\sqrt{\pi}\Gamma\left(\frac{|\gamma|+n-1}{2}\right)}\,\frac{(2\lambda^2)^{2-n-|\gamma|}}{|S_1^+(n)|_\gamma}\int\limits_{\mathbb{R}^+_n}\widehat{F}(z){\bf j}_\gamma(z,y)z^\gamma
dz=
$$
$$
=\frac{2\,\Gamma\left(\frac{n{+}|\gamma|}{2}\right)}{\sqrt{\pi}\Gamma\left(\frac{|\gamma|+n-1}{2}\right)}\,\frac{(2\lambda^2)^{2-n-|\gamma|}}{|S_1^+(n)|_\gamma}\,2^{|\gamma|-n}\prod\limits_{j=1}^n\Gamma^2
\left(\frac{\gamma_j+1}{2}\right)\, F(y)=
$$
$$
=\frac{\Gamma^2\left(\frac{n{+}|\gamma|}{2}\right)\prod\limits_{j=1}^n\Gamma
\left(\frac{\gamma_j+1}{2}\right)}{\sqrt{\pi}2^{2n-3}\lambda^{{2n+2|\gamma|-4}}\Gamma\left(\frac{|\gamma|+n-1}{2}\right)}\, F(y).
$$
Следовательно,
\begin{equation}\label{ItF}
   F(y)= \frac{\sqrt{\pi}2^{2n-3}\lambda^{{2n+2|\gamma|-4}}\Gamma\left(\frac{|\gamma|+n-1}{2}\right)}{\Gamma^2\left(\frac{n{+}|\gamma|}{2}\right)\prod\limits_{j=1}^n\Gamma
\left(\frac{\gamma_j+1}{2}\right)}\,I_f^\gamma(0;\lambda,\lambda).
\end{equation}

С другой стороны
\begin{equation}\label{Itf2}
I^\gamma_f(0;\lambda,\lambda)=\frac{1}{|S_1^+(n)|_\gamma^2}\int\limits_{S_1^+(n)}\int\limits_{S_1^+(n)}T_{t\xi}^{t\zeta}[f(t\xi)]\zeta^\gamma
\xi^\gamma dS(\xi) dS(\zeta).
\end{equation}

Из \eqref{ItF} и \eqref{Itf2} получим
$$
F(y)=\frac{\sqrt{\pi}2^{2n-3}\lambda^{{2n+2|\gamma|-4}}\Gamma\left(\frac{|\gamma|+n-1}{2}\right)}{\Gamma^2\left(\frac{n{+}|\gamma|}{2}\right)\prod\limits_{j=1}^n\Gamma
\left(\frac{\gamma_j+1}{2}\right)|S_1^+(n)|_\gamma^2}\times$$
$$\times\int\limits_{S^+_1(n)}\int\limits_{S^+_1(n)}T_{\lambda\xi}^{\lambda\zeta}\left[\frac{|\lambda\xi|}{(4\lambda^2-|t\xi|^2)^{\frac{n+|\gamma|-3}{2}}}\widehat{F}(\lambda\xi){\bf j}_\gamma(\lambda\xi,y)\right]\zeta^\gamma
\xi^\gamma dS(\xi) dS(\zeta).
$$
\end{proof}

\bigskip


\begin{thebibliography}{99} % Ссылки на литературу приводятся только на английском языке (даже если у источника нет англоязычной версии).


\bibitem{1} F. John, {\it Plane Waves and Spherical Means Applied to Partial Differential Equations}, Reprint. Berlin-Heidelberg-New York, Springer-Verlag, 1981.

    \bibitem{2} L. H\"ormander,  {\it The Analysis of Linear Partial Differential Operators I: Distribution Theory and Fourier Analysis}, 2nd ed. New York: Springer-Verlag. 1990.

\bibitem{3} R. Courant, D. Hilbert, {\it Methods of Mathematical Physics}, vol. 2, Wiley-Interscience,  1962.

\bibitem{Nat} F. Natterer, {\it The Mathematics of Computerized Tomography}, SIAM, 1986.

\bibitem{Gorner} Y. Dong, T. G\"orner, and S. Kunis, {\it An algorithm for total variation regularized photoacoustic imaging}, Adv. Comput. Math., \textbf{41(2)} (2015), 423--438.

\bibitem{Helgason2}    S. Helgason, {\it  Groups and Geometric Analysis}, Amer. Math. Soc., Providence, RI, 2000 Academic Press, San Diego, CA, 1984.

\bibitem{Sita1} S.M. Sitnik, {\it Transmutations and Applications: a survey}, Researches on modern analysis and mathematical modelling, Vladikavkaz, (2008), 226--293. (arXiv:1012.3741, (2012), 141 P.)

\bibitem{Sita2} V.V. Katrakhov,  S.M. Sitnik, {\it Composition method for constructing $B$--elliptic, $B$--hyperbolic, and $B$--parabolic transformation operators},   Russ. Acad. Sci., Dokl.,  {\bf Math. 50}, (1995), № 1, 70--77.

\bibitem{Dea} S. Deans, {\it The Radon Transform and Some of Its Applications}, revised edition, Krieger Publishing Co., Malabar, FL, 1993.

\bibitem{Sita3} S.M. Sitnik, {\it Buschman--Erdelyi transmutations, classification and applications}, Analytic methods of analysis   and differential equations: AMADE 2012 ( Edited by M.V. Dubatovskaya,   S.V. Rogosin), Cambridge Scientific Publishers, (2013), 171--201.

\bibitem{Sita4} S.M. Sitnik, {\it Factorization and estimates of the norms of
Buschman--Erdеlyi operators in weighted Lebesgue spaces},  Soviet Mathematics Doklades,  {\bf 44}, (1992), № 2,  641--646.

\bibitem{Rub1} B. Rubin, {\it Radon transforms and Gegenbauer--Chebyshev integrals, I}, Anal. Math. Phys., 2016.

\bibitem{Rub2} B. Rubin, {\it On the Funk-–Radon–-Helgason inversion method in integral geometry}, Contemp. Math. \textbf{599}, (2013), 175--198.

\bibitem{Delsartes}  J. Delsartes, {\it Une extension nouvelle de la theorie de fonctions presque—-periodiques
de Bohr}, Acta Mathematica, \textbf{69} (1939), 257--317.

\bibitem{levitan}  B.M. Levitan, {\it Expansion in Fourier Series and Integrals with Bessel Functions}, Uspekhi Mat. Nauk, \textbf{6:2(42)} (1951), 102--143.



\bibitem{LPSh1} L.N. Lyakhov,  I.P. Polovinkin, E.L. Shishkina, {\it On  Kipriyanov problem for a singular ultrahyperbolic equation},
  Differ. Equ. \textbf{50:4} (2014), 513--525. % (Translation of Differ. Uravn. 50 (2014), no. 4, pp 516-528).


\bibitem{LPSh2} L.N. Lyakhov,  I.P. Polovinkin, E.L. Shishkina, {\it Formulas for the Solution of the Cauchy Problem for a Singular Wave Equation with Bessel Time Operator},  Doklady Mathematics, \textbf{90:3}, (2014), 737--742.

\bibitem{Asgeirsson} L. ${\rm \grave{A}}$sgeirsson, {\it \"Uber eine Mittelwertseigenschaft von L\"osungen homogener linearer
partieller Differentialgleichungen 2. Ordnung mit Konstanten Koeffizienten}, Math. Ann., \textbf{113},  (1936), 321--346.

\bibitem{Fox} D.N. Fox,  {\it The solution and Huygens’ principle for a singular Cauchy problem},
J. Math. Mech., \textbf{8}, (1959),  197--219.

\bibitem{Kuchment} P. Kuchment, 	 {\it The Radon Transform and Medical Imaging},
Society for Industrial and Applied Mathematics Philadelphia, PA, USA, 2014.

\bibitem{singul} I.A. Kipriyanov,  {\it Singular Elliptic Boundary Value Problems}, Moscow: Nauka, 1997.

\bibitem{LyahovKniga} L.N. Lyakhov,  {\it B--hypersingular integrals and its applications to   Kipriyanovs'  functional classes and to integral equations with B--potential kernels}, Lipetsk, LGPU, 2007.

 \bibitem{KiprZas} I.A. Kipriyanov, Yu.V. Zasorin,  {\it On fundamental solution of wave equation with many singularities and  Huygens’ principle},  Differ. Equ. \textbf{28:3}, (1992), 452--462.


\bibitem{Lyah13} L.N. Lyakhov, {\it $RK_\gamma$--transform with $\gamma\in(0,2]$ of weighted spherical means of functions.  $\grave{A}$sgeirssons'  equality},   Dokl. acad., \textbf{439:5}, (2011), 589--592.


\bibitem{ShishGer}  E. L. Shishkina,  {\it  Equality for iterated weighted spherical means generated by the generalized translation}, Some problems of mathematics and mathematical education.  St. Petersburg. Conference proceedings, \textbf{LXVI}, (2013), 143--145.

\bibitem{KiprIvanov} I.A. Kipriyanov, L.A. Ivanov,  {\it On obtaining of fundamental solutions for homogeneous equations with singularities with respect to some variables}. Acad. nauk SSSR. Siberian branch, \textbf{1}, (1983),  55--77.

\bibitem{Higgins}  W. E. Higgins,  D.C.  Munson, {\it A Hankel transform approach to tomographic image reconstruction}, IEEE Trans. Med. Im. \textbf{7}, (1988), 59--72.

\bibitem{Halliwell} M. Halliwell, P.N. Wells,  {\it Acoustical Imaging}, Springer,  2001.

\bibitem{Beylkin}  G. Beylkin,  {\it The fundamental identity for iterated spherical means and the inversion formula for
diffraction tomography and inverse scattering}, Journal of Mathematical Physics, \textbf{24(6)}, (1983), 1399--1400.

\bibitem{Ewald} P.P. Ewald,   {\it Introduction to the dynamical theory of X--ray diffraction}. Acta Crystallographica Section, \textbf{A25:103}, (1969), 103--108.

\end{thebibliography}
\end{document}